\begin{document}

\theoremstyle{plain}

\newtheorem{thm}{Theorem}[section]
\newtheorem{lem}[thm]{Lemma}
\newtheorem{conj}[thm]{Conjecture}
\newtheorem{pro}[thm]{Proposition}
\newtheorem{cor}[thm]{Corollary}
\newtheorem{que}[thm]{Question}
\newtheorem{rem}[thm]{Remark}
\newtheorem{defi}[thm]{Definition}
\newtheorem{hyp}[thm]{Hypothesis}

\newtheorem*{thmA}{THEOREM A}
\newtheorem*{corB}{COROLLARY B}
\newtheorem*{corC}{COROLLARY C}

\newtheorem*{thmC}{THEOREM C}
\newtheorem*{conjA}{CONJECTURE A}
\newtheorem*{conjB}{CONJECTURE B}
\newtheorem*{conjC}{CONJECTURE C}

\newtheorem*{thmAcl}{Main Theorem$^{*}$}
\newtheorem*{thmBcl}{Theorem B$^{*}$}

\numberwithin{equation}{section}

\newcommand{\Maxn}{\operatorname{Max_{\textbf{N}}}}
\newcommand{\Syl}{\operatorname{Syl}}
\newcommand{\dl}{\operatorname{\mathfrak{d}}}
\newcommand{\Con}{\operatorname{Con}}
\newcommand{\cl}{\operatorname{cl}}
\newcommand{\Stab}{\operatorname{Stab}}
\newcommand{\Aut}{\operatorname{Aut}}
\newcommand{\Ker}{\operatorname{Ker}}
\newcommand{\IBr}{\operatorname{IBr}}
\newcommand{\Irr}{\operatorname{Irr}}
\newcommand{\SL}{\operatorname{SL}}
\newcommand{\FF}{\mathbb{F}}
\newcommand{\NN}{\mathbb{N}}
\newcommand{\N}{\mathbf{N}}
\newcommand{\C}{\mathbf{C}}
\newcommand{\OO}{\mathbf{O}}
\newcommand{\F}{\mathbf{F}}

\renewcommand{\labelenumi}{\upshape (\roman{enumi})}

\newcommand{\GL}{\operatorname{GL}}
\newcommand{\Sp}{\operatorname{Sp}}
\newcommand{\PGL}{\operatorname{PGL}}
\newcommand{\PSL}{\operatorname{PSL}}
\newcommand{\SU}{\operatorname{SU}}
\newcommand{\PSU}{\operatorname{PSU}}
\newcommand{\PSp}{\operatorname{PSp}}

\providecommand{\V}{\mathrm{V}}
\providecommand{\E}{\mathrm{E}}
\providecommand{\ir}{\mathrm{Irr_{rv}}}
\providecommand{\Irrr}{\mathrm{Irr_{rv}}}
\providecommand{\re}{\mathrm{Re}}

\def\irrp#1{{\rm Irr}_{p'}(#1)}
\def\ibrp#1{{\rm IBr}_{p'}(#1)}

\def\Z{{\mathbb Z}}
\def\C{{\mathbb C}}
\def\Q{{\mathbb Q}}
\def\irr#1{{\rm Irr}(#1)}
\def\ext#1{{\rm Ext}(#1)}
\def\ibr#1{{\rm IBr}(#1)}
\def\irra#1{{\rm Irr}_{\rm A}(#1)}
\def\ibra#1{{\rm IBr}_{\rm A}(#1)}
\def \c#1{{\cal #1}}
\def\cent#1#2{{\bf C}_{#1}(#2)}
\def\syl#1#2{{\rm Syl}_#1(#2)}
\def\nor{\trianglelefteq\,}
\def\oh#1#2{{\bf O}_{#1}(#2)}
\def\Oh#1#2{{\bf O}^{#1}(#2)}
\def\zent#1{{\bf Z}(#1)}
\def\det#1{{\rm det}(#1)}
\def\ker#1{{\rm ker}(#1)}
\def\norm#1#2{{\bf N}_{#1}(#2)}
\def\alt#1{{\rm Alt}(#1)}
\def\iitem#1{\goodbreak\par\noindent{\bf #1}}
   \def \mod#1{\, {\rm mod} \, #1 \, }
\def\sbs{\subseteq}

\def\gc{{\bf GC}}
\def\ngc{{non-{\bf GC}}}
\def\ngcs{{non-{\bf GC}$^*$}}
\newcommand{\notd}{{\!\not{|}}}
\newcommand{\Out}{{\mathrm {Out}}}
\newcommand{\Mult}{{\mathrm {Mult}}}
\newcommand{\Inn}{{\mathrm {Inn}}}
\newcommand{\IBR}{{\mathrm {IBr}}}
\newcommand{\IBRL}{{\mathrm {IBr}}_{\ell}}
\newcommand{\IBRP}{{\mathrm {IBr}}_{p}}
\newcommand{\ord}{{\mathrm {ord}}}
\def\id{\mathop{\mathrm{ id}}\nolimits}
\renewcommand{\Im}{{\mathrm {Im}}}
\newcommand{\Ind}{{\mathrm {Ind}}}
\newcommand{\diag}{{\mathrm {diag}}}
\newcommand{\soc}{{\mathrm {soc}}}
\newcommand{\End}{{\mathrm {End}}}
\newcommand{\sol}{{\mathrm {sol}}}
\newcommand{\Hom}{{\mathrm {Hom}}}
\newcommand{\Mor}{{\mathrm {Mor}}}
\newcommand{\St}{{\sf {St}}}
\def\rank{\mathop{\mathrm{ rank}}\nolimits}
\newcommand{\Tr}{{\mathrm {Tr}}}
\newcommand{\tr}{{\mathrm {tr}}}
\newcommand{\Gal}{{\it Gal}}
\newcommand{\Spec}{{\mathrm {Spec}}}
\newcommand{\ad}{{\mathrm {ad}}}
\newcommand{\Sym}{{\mathrm {Sym}}}
\newcommand{\Char}{{\mathrm {char}}}
\newcommand{\pr}{{\mathrm {pr}}}
\newcommand{\rad}{{\mathrm {rad}}}
\newcommand{\abel}{{\mathrm {abel}}}
\newcommand{\codim}{{\mathrm {codim}}}
\newcommand{\ind}{{\mathrm {ind}}}
\newcommand{\Res}{{\mathrm {Res}}}
\newcommand{\Ann}{{\mathrm {Ann}}}
\newcommand{\Ext}{{\mathrm {Ext}}}
\newcommand{\Alt}{{\mathrm {Alt}}}
\newcommand{\AAA}{{\sf A}}
\newcommand{\SSS}{{\sf S}}
\newcommand{\CC}{{\mathbb C}}
\newcommand{\CB}{{\mathbf C}}
\newcommand{\RR}{{\mathbb R}}
\newcommand{\QQ}{{\mathbb Q}}
\newcommand{\ZZ}{{\mathbb Z}}
\newcommand{\KK}{{\mathbb K}}
\newcommand{\NB}{{\mathbf N}}
\newcommand{\ZB}{{\mathbf Z}}
\newcommand{\OB}{{\mathbf O}}
\newcommand{\EE}{{\mathbb E}}
\newcommand{\PP}{{\mathbb P}}
\newcommand{\GC}{{\mathcal G}}
\newcommand{\HC}{{\mathcal H}}
\newcommand{\AC}{{\mathcal A}}
\newcommand{\BC}{{\mathcal B}}
\newcommand{\GA}{{\mathfrak G}}
\newcommand{\SC}{{\mathcal S}}
\newcommand{\TC}{{\mathcal T}}
\newcommand{\DC}{{\mathcal D}}
\newcommand{\LC}{{\mathcal L}}
\newcommand{\RC}{{\mathcal R}}
\newcommand{\CL}{{\mathcal C}}
\newcommand{\EC}{{\mathcal E}}
\newcommand{\GCD}{\GC^{*}}
\newcommand{\TCD}{\TC^{*}}
\newcommand{\FD}{F^{*}}
\newcommand{\GD}{G^{*}}
\newcommand{\HD}{H^{*}}
\newcommand{\hG}{\hat{G}}
\newcommand{\hP}{\hat{P}}
\newcommand{\hQ}{\hat{Q}}
\newcommand{\hR}{\hat{R}}
\newcommand{\GCF}{\GC^{F}}
\newcommand{\TCF}{\TC^{F}}
\newcommand{\PCF}{\PC^{F}}
\newcommand{\GCDF}{(\GC^{*})^{F^{*}}}
\newcommand{\RGTT}{R^{\GC}_{\TC}(\theta)}
\newcommand{\RGTA}{R^{\GC}_{\TC}(1)}
\newcommand{\Om}{\Omega}
\newcommand{\eps}{\epsilon}
\newcommand{\varep}{\varepsilon}
\newcommand{\al}{\alpha}
\newcommand{\chis}{\chi_{s}}
\newcommand{\sigmad}{\sigma^{*}}
\newcommand{\PA}{\boldsymbol{\alpha}}
\newcommand{\gam}{\gamma}
\newcommand{\lam}{\lambda}
\newcommand{\la}{\langle}
\newcommand{\ra}{\rangle}
\newcommand{\hs}{\hat{s}}
\newcommand{\htt}{\hat{t}}
\newcommand{\sgn}{\mathsf{sgn}}
\newcommand{\SR}{^*R}
\newcommand{\tn}{\hspace{0.5mm}^{t}\hspace*{-0.2mm}}
\newcommand{\ta}{\hspace{0.5mm}^{2}\hspace*{-0.2mm}}
\newcommand{\tb}{\hspace{0.5mm}^{3}\hspace*{-0.2mm}}
\def\skipa{\vspace{-1.5mm} & \vspace{-1.5mm} & \vspace{-1.5mm}\\}
\newcommand{\tw}[1]{{}^#1\!}
\renewcommand{\mod}{\bmod \,}

\newcommand{\Irre}[1]{{\rm Irr}(#1)}
\newcommand{\Irra}[2]{{\rm Irr}_{#1}(#2)}
\renewcommand{\N}[2]{{{\bf N}_{#1}(#2)}}
\def\st{\text{ }|\text{ }}
\newcommand{\inv}[1]{{#1}^{-1}}

\marginparsep-0.5cm

\renewcommand{\thefootnote}{\fnsymbol{footnote}}
\footnotesep6.5pt

\title{The Fields of Values of the Isaacs' Head Characters}
\author{Gabriel Navarro}
\address{Departament de Matem\`atiques, Universitat de Val\`encia, 46100 Burjassot,
Val\`encia, Spain}
\email{gabriel@uv.es}

\thanks{This research is supported by Grant PID2022-137612NB-I00
 funded by MCIN/AEI/ 10.13039/501100011033 and ERDF ``A way of making Europe".
 I thank N. Rizo and L. Sanus for a careful reading of this manuscript.}

\dedicatory{To I. M. Isaacs, in memoriam}

\keywords{Head Characters, Carter subgroups, Rational Groups}

\subjclass[2010]{Primary 20D20; Secondary 20C15}

\begin{abstract}
We determine the fields of values of the Isaacs' head characters of a finite solvable group.
 \end{abstract}

\maketitle

\section{Introduction} In one of his final major papers (\cite{Is22}), I. M. Isaacs introduced a canonical subset ${\rm H}(G)
\sbs \irr G$ of the irreducible characters of a finite solvable group $G$, in bijection with the linear characters
${\rm Lin}(C)$ of a Carter subgroup $C$ of $G$.  (Discovered by R. W. Carter in \cite{C61}, recall that $C$ is any self-normalizing nilpotent subgroup of $G$ and that any two of them are $G$-conjugate \cite{C61}.)
Isaacs named them the {\sl head characters} of $G$. As a corollary, he proved that $|C/C'| \le k(G)$,
where $C'=[C,C]$ is the derived subgroup of $C$ and $k(G)$ is the number of conjugacy classes of $G$. A direct proof of this result is not immediately apparent.

Subsequently, I constructed a canonical subset ${\rm Irr}_{\mathfrak F'}(G)$ of the complex characters of a finite solvable group $G$, associated with every formation $\mathfrak F$, and in bijection with $\irr{\norm GH/H'}$, where $H$ is an $\mathfrak F$-projector of $G$  (\cite{N22}). When $\mathfrak F$ is the class of $p$-groups,
 then ${\rm Irr}_{\mathfrak F'}(G)$ is the set of irreducible characters of $G$ of degree not divisible by $p$
 and $H$ is a Sylow $p$-subgroup of $G$; when $\mathfrak F$ is the class of nilpotent groups, then 
 $H=C$ is a Carter subgroup of $G$ and ${\rm Irr}_{\mathfrak F'}(G) = {\rm H}(G)$. Thus, Isaacs' result can be seen as the {\sl McKay conjecture} for the formation of nilpotent groups.

There are many open questions about this intriguing set ${\rm H}(G)$. For instance, whether these characters can be detected from the character table remains an open question. It is also unknown whether ${\rm H}(G)$ consists of the irreducible characters of $G$ that do not have any zero value on the Carter subgroup $C$. Isaacs did prove that if $\eta \in {\rm H}(G)$, then $\eta_K \in \irr K$ whenever $K \nor G$ and $G/K$ is nilpotent, and that $\eta(1)$ divides $|G:C|$, but these properties certainly do not
characterize the set ${\rm H}(G)$. On a slightly different subject, it is not even known if the character table 
of a solvable group $G$ determines the size $|C|$ of the Carter subgroup $C$, the number
of linear caracters $|C/C'|$ of $C$, or even the primes dividing $|C|$.
Hence, a determination of ${\rm H}(G)$ in the character table sounds difficult.

In this note, we offer a new property of the Isaacs head characters and present two consequences
beyond their world. Recall that if $\chi$ is a character, then the field of values of $\chi$, $\Q(\chi)$,
 is the smallest subfield of the complex numbers containing the values of $\chi$. Although $|{\rm H}(G)| = |C/C'|$, no canonical bijection ${\rm Lin}(C) \rightarrow {\rm H}(G)$ is known
 and therefore, a priori, no connection between the
 field of values of these characters can be made. Nevertheless, we establish the following global/local result.

\begin{thmA}
Let $G$ be a finite solvable group, and let $C$ be a Carter subgroup of $G$. Then the fields of  values of the head characters are exactly the
$n$--th cyclotomic fields $\Q_n$ corresponding to the orders $n$ of the linear characters of $C$.
\end{thmA}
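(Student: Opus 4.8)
The plan is to recast the statement Galois-theoretically and then to upgrade the construction of ${\rm H}(G)$ to a Galois-equivariant one. Set $\Gamma=\Gal(\Q_{|G|}/\Q)$, a finite abelian group; it acts on $\irr{G}$ and, since $|C|$ divides $|G|$, on $\irr{C}$, permuting ${\rm Lin}(C)$. For a class function $\chi$ with values in $\Q_{|G|}$ we have $\Q(\chi)=\Q_{|G|}^{\,\Gamma_\chi}$, the fixed field of the stabiliser $\Gamma_\chi=\{\sigma\in\Gamma:\chi^\sigma=\chi\}$; and for $\lambda\in{\rm Lin}(C)$ of order $n$ one has $\Gamma_\lambda=\Gal(\Q_{|G|}/\Q_n)$, so $\Q(\lambda)=\Q_n$. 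Hence Theorem~A follows from the assertion that there exists a $\Gamma$-equivariant bijection $\beta\colon{\rm Lin}(C)\to{\rm H}(G)$: such a $\beta$ matches point stabilisers, whence $\Q(\beta(\lambda))=\Q_{|G|}^{\,\Gamma_{\beta(\lambda)}}=\Q_{|G|}^{\,\Gamma_\lambda}=\Q(\lambda)=\Q_{o(\lambda)}$ for every $\lambda$, and letting $\lambda$ run over ${\rm Lin}(C)$ identifies the multiset of fields of values of ${\rm H}(G)$ with $\{\Q_{o(\lambda)}:\lambda\in{\rm Lin}(C)\}$. It is worth noting that, as $\Gamma$ need not be cyclic, it would \emph{not} suffice merely to know that ${\rm H}(G)$ is $\Gamma$-stable and that $|{\rm H}(G)^\sigma|=|{\rm Lin}(C)^\sigma|$ for each $\sigma\in\Gamma$: two $\Gamma$-sets may share a permutation character while having different multisets of point stabilisers, hence of fixed fields. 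One genuinely needs an isomorphism of $\Gamma$-sets ${\rm Lin}(C)\cong{\rm H}(G)$, equivalently $|{\rm H}(G)^\Delta|=|{\rm Lin}(C)^\Delta|$ for \emph{every} subgroup $\Delta\le\Gamma$.

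I would prove the existence of $\beta$ by strong induction on $|G|$, re-running the construction of ${\rm H}(G)$ from \cite{Is22} (equivalently, that of ${\rm Irr}_{\mathfrak{F}'}(G)$ from \cite{N22} with $\mathfrak F$ the formation of nilpotent groups) and carrying along the strengthened inductive hypothesis that the bijection ${\rm Lin}(C)\to{\rm H}(G)$ may be chosen $\Gamma$-equivariant. The base case is $G$ nilpotent, i.e.\ $G=C$, where ${\rm H}(G)={\rm Lin}(G)={\rm Lin}(C)$ and $\beta$ is the identity. For the inductive step one reduces, by Clifford theory, along a minimal normal subgroup $N$ of $G$ (chosen, as is usual in McKay-type reductions, inside the nilpotent residual $G^{\mathfrak N}$, which is nontrivial since $G$ is not nilpotent); $N$ is an elementary abelian $p$-group, so its irreducible characters $\theta$ are linear, and their $G$-orbits split $\irr{G}$, hence ${\rm H}(G)$. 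When $\theta$ is not $G$-invariant, the Clifford correspondence with the inertia subgroup $T=G_\theta<G$ reduces the relevant part of ${\rm H}(G)$ to ${\rm H}(T)$, while ${\rm Lin}(C)$ is partitioned compatibly because a Carter subgroup of $T$ is controlled by $C$ through the conjugacy theory of \cite{C61}; induction applied to $T$ supplies this piece of $\beta$. When $\theta$ is $G$-invariant, one uses the canonical correspondences attached to the character triple $(G,N,\theta)$ --- Gallagher's theorem when $\theta$ extends to $G$, a character-triple isomorphism in the fully ramified case, a Glauberman--Isaacs-type correspondence otherwise --- together with passage to the section $G/\ker\theta$, again tracking Carter subgroups via \cite{C61}. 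To make each of these steps $\Gamma$-equivariant one invokes the relevant Galois-naturality, all of which is available in the solvable-group literature: induction, restriction and the Clifford correspondence commute with the action of $\Gal(\Q_{|G|}/\Q)$; so does the Glauberman--Isaacs-type correspondence; the canonical extension and character-triple isomorphism in the fully ramified case may be chosen to; and the Carter-subgroup bijections of \cite{C61} used to match ${\rm Lin}(C)$ with the linear characters of the subquotients' Carter subgroups are Galois-natural. Since for a subquotient $X$ we have $\Q_{|X|}\subseteq\Q_{|G|}$ with $\Gamma$ surjecting onto $\Gal(\Q_{|X|}/\Q)$, a $\Gal(\Q_{|X|}/\Q)$-equivariant bijection furnished by induction is automatically $\Gamma$-equivariant for the inflated action; what must then be checked is that the correspondence tying ${\rm H}(X)$ back to its part of ${\rm H}(G)$ preserves fields of values, which is exactly what the Galois-naturality statements guarantee. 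It should also fall out of this analysis that ${\rm H}(G)$ itself is $\Gamma$-stable, being assembled from $\Gamma$-natural pieces.

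The main obstacle is the one familiar from the Galois--McKay circle of ideas: a Galois automorphism $\sigma\in\Gamma$ need not fix the character $\theta\in\irr N$ along which one reduces --- it sends $\theta$ to a Galois conjugate $\theta^\sigma$ which is not in general a $G$-conjugate of $\theta$, and which is carried by a possibly different inertia subgroup $G_{\theta^\sigma}$. Thus the Clifford reduction is equivariant only up to the permutation of Clifford sections and of inertia subgroups induced by $\Gamma$, and the substance of the proof is to reconcile this: one must show that the canonical correspondences (Glauberman--Isaacs-type, and fully ramified) intertwine the $\Gamma$-actions across these possibly non-$G$-conjugate inertia subgroups, and that the chosen extensions of invariant characters --- as well as the Carter subgroups of the subquotients, which \cite{C61} pins down only up to $G$-conjugacy --- can be fixed compatibly with both $\Gamma$ and a chosen $C$. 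I expect this bookkeeping, rather than any genuinely new character-theoretic input, to be the bulk of the work; the ingredients needed --- Galois-equivariant character-triple isomorphisms, the behaviour of the Glauberman correspondence under field automorphisms, and the $\Gamma$-structure of the associated projective representations --- are all present in the literature on solvable groups, but assembling them for the nilpotent-formation construction is the delicate point. One last routine matter: the coincidences $\Q_n=\Q_m$ with $n\ne m$ are harmless, since equal cyclotomic fields correspond to equal subgroups of $\Gamma$, so the passage between ``fields of values'' and ``point stabilisers'' used throughout is literally an equality of multisets.
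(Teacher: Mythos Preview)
Your reduction scheme does not match the construction of ${\rm H}(G)$ and has a genuine gap. You propose to reduce along a minimal normal subgroup $N\le G^{\mathfrak N}$ and, for a non-$G$-invariant $\theta\in\irr N$, to pass via the Clifford correspondence to the inertia subgroup $T=G_\theta$, asserting that this ``reduces the relevant part of ${\rm H}(G)$ to ${\rm H}(T)$''. That assertion is unjustified: head characters are defined recursively through the descent $G\to H=LC$ with $L=K'$ and $K$ the nilpotent residual, not through inertia subgroups over a minimal normal $N$, and there is no result in \cite{Is22} or \cite{N22} saying that Clifford correspondents of head characters are head characters of $T$. Worse, $C$ need not be a Carter subgroup of $T$, nor even lie in $T$; the phrase ``a Carter subgroup of $T$ is controlled by $C$ through the conjugacy theory of \cite{C61}'' hides the real problem, since a Carter subgroup $C_T$ of $T$ can have $|C_T/C_T'|$ quite different from $|C/C'|$, and you give no mechanism for assembling the various ${\rm Lin}(C_T)$ (over a transversal of $G$-orbits of $\irr N$) back into ${\rm Lin}(C)$. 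The invariant case is likewise only sketched: character-triple isomorphisms are not, in general, Galois-equivariant without further work, and you do not indicate which of the available canonical choices you intend or why it interacts correctly with the head-character recursion.

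The paper's proof proceeds along a different axis that avoids all of this. One descends from $G$ to $H=LC$ with $L=K'$; the crucial point is that $C$ is a Carter subgroup of \emph{both} $G$ and $H$, so ${\rm Lin}(C)$ is literally the same set on both sides and induction applies to $H$ with no bookkeeping over inertia subgroups. It then suffices to produce a field-preserving bijection ${\rm H}(H)\to{\rm H}(G)$, which, by the recursive description of head characters, decomposes into bijections ${\rm Ext}(H|\varphi)\to{\rm Ext}(G|\theta)$ for pairs $(\theta,\varphi)$ matched by the Isaacs correspondence of Theorem~\ref{is}. This is exactly the content of the key Theorem~\ref{main}: one first observes $\Q(\theta)=\Q(\varphi)$ from the uniqueness in Theorem~\ref{is}, reduces to $K/L$ a chief factor, and then handles three cases---$G_\varphi=H$ via the Clifford correspondence, $\theta_L=\varphi$ via restriction, and the fully ramified case. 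In the last case, after a further reduction, one reaches the coprime situation and invokes Isaacs' results from \cite{Is73}: there is a rational-valued character $\Psi$ of $H/L$ with $\chi_H=\Psi\,\chi^*$, and $\chi$ vanishes off $G$-conjugates of $H$, which immediately gives $\Q(\chi)=\Q(\chi^*)$. This last ingredient---the rationality of $\Psi$ in the coprime fully ramified situation---is the substantive character-theoretic input, and nothing in your outline plays its role.
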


We see Theorem A as the {\sl Galois-McKay theorem} for the formation of nilpotent groups.
We remind the reader that the analog of Theorem A
for the formation of $p$-groups (that is, the McKay original case)   does not in general hold,  as shown,
 for instance, by $G={\rm GL}_2(3)$ and $p=3$. (Here $\norm GP/P'$ is a rational group, and $G$ has irreducible characters of degree 2
and field of values $\Q(i\sqrt 2)$.)   It is interesting to notice that the Galois--McKay conjecture (\cite{N04})
requires to consider field of values of characters over the $p$-adics, while the analog for
formations containing the nilpotent groups seems to
work well over the rationals. (The corresponding {\sl head characters} associated to those, together with some other properties of the head characters have been recently considered in \cite{FGS25}.)
\medskip

As a consequence of Theorem A, we prove the following.

\begin{corB}
Suppose that $G$ is a solvable group, and let $C$ be a Carter subgroup of $G$.
Then $G$ has at least $m$ rational-valued irreducible characters, where $m=|P/\Phi(P)|$,
$P\in \syl 2 C$ and $\Phi(P)$ is the Frattini subgroup of $P$.
\end{corB}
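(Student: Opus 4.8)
The strategy is to read Corollary B off Theorem A, the only additional ingredient being an elementary count of the rational-valued linear characters of the nilpotent group $C$. For the first step, note that a character is rational-valued exactly when its field of values is $\Q = \Q_1 = \Q_2$. By Theorem A the multiset of fields of values $\bigl(\Q(\eta)\bigr)_{\eta \in \mathrm H(G)}$ coincides with the multiset $\bigl(\Q_{o(\lambda)}\bigr)_{\lambda \in \mathrm{Lin}(C)} = \bigl(\Q(\lambda)\bigr)_{\lambda \in \mathrm{Lin}(C)}$; in particular $G$ has exactly as many rational-valued head characters as $C$ has rational-valued linear characters. Since $\mathrm H(G) \subseteq \irr{G}$, it suffices to prove that $C$ has exactly $m = |P/\Phi(P)|$ rational-valued linear characters, where $P \in \syl{2}{C}$.

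For the second step, a linear character $\lambda$ of $C$ is rational-valued iff its values lie in $\{\pm 1\}$, i.e. iff $\lambda \in \Hom(C/C',\{\pm1\})$, so the number of such $\lambda$ is $|(C/C')/(C/C')^{2}|$. Because $C$ is nilpotent, $C = P \times K$ with $K = \oh{2'}{C}$, hence $C/C' \cong P/P' \times K/K'$; as $K/K'$ has odd order, this number equals $|(P/P')/(P/P')^{2}|$. Finally, for the $2$-group $P$ one has $\Phi(P) = P'P^{2}$ (with $P^{2} = \langle x^{2} \mid x \in P\rangle$), so $(P/P')/(P/P')^{2} \cong P/\Phi(P)$, which is elementary abelian of order $m$. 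Combining the two steps gives at least $m$ rational-valued characters in $\irr{G}$.

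I do not expect a genuine obstacle here beyond Theorem A itself: the second step is a routine computation with the Frattini subgroup of a nilpotent group. The one point that deserves care is to use Theorem A in its counting form — that the matching between $\mathrm H(G)$ and $\mathrm{Lin}(C)$ respects fields of values \emph{with multiplicity} — since only this yields the lower bound $m$ and not the trivial bound $1$.
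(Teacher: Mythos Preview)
Your proposal is correct and follows essentially the same approach as the paper: deduce the result from Theorem~A together with an elementary count of rational-valued linear characters of $C$. The paper's own proof is a single sentence (``all characters of $P/\Phi(P)$ are rational-valued''), which amounts to exhibiting $m$ rational linear characters of $C$; you go slightly further and show there are \emph{exactly} $m$ such, but the underlying idea is identical.
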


We can also use Theorem A to reprove a fact on rational groups,   which is not too
well-known (and which does not seem to have a direct group theoretical
proof).
Recall that a rational group is a group whose irreducible characters are all rational-valued.

\begin{corC}
Suppose that $G$ is a solvable rational group. Then the Carter subgroups of $G$ are the Sylow 2-subgroups of $G$.
\end{corC}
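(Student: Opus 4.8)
The plan is to feed the rationality hypothesis into Theorem A, read off that a Carter subgroup must be a $2$-group, and then promote that to being a Sylow $2$-subgroup via the standard fact that a self-normalizing $2$-subgroup of any finite group is Sylow. First I would observe that, since $G$ is rational, every head character $\eta\in{\rm H}(G)$ is rational-valued, so $\Q(\eta)=\Q$. By Theorem A the set of fields of values of the head characters is exactly $\{\Q_n\}$, where $n$ runs over the orders of the linear characters of $C$; hence $\Q_n=\Q$ for every such $n$. Since $[\Q_n:\Q]=\varphi(n)$, the equality $\Q_n=\Q$ forces $n\in\{1,2\}$. Thus every linear character of $C$ has order dividing $2$; equivalently, $C/C'$ is an elementary abelian $2$-group.

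Next I would exploit the nilpotence of $C$. Writing $C=P\times H$ with $P\in\syl{2}{C}$ and $H$ the (normal) Hall $2'$-subgroup of $C$, we get $C/C'\cong (P/P')\times(H/H')$, of order $|P/P'|\cdot|H/H'|$. Since $C/C'$ is a $2$-group we must have $|H/H'|=1$, so $H=H'$; a perfect nilpotent group is trivial, hence $H=1$ and $C=P$ is a $2$-group.

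Finally, pick $S\in\syl{2}{G}$ containing $C$ (possible since $C$ is a $2$-group). If $C<S$, then, as a proper subgroup of a finite $p$-group is properly contained in its normalizer inside that group, we would have $C<\N{S}{C}\le\N{G}{C}=C$, a contradiction; hence $C=S\in\syl{2}{G}$. Since all Carter subgroups of $G$ are $G$-conjugate and all Sylow $2$-subgroups of $G$ are $G$-conjugate, and we have just identified a Carter subgroup with a Sylow $2$-subgroup, the two conjugacy classes of subgroups coincide, which is the assertion.

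I do not expect a genuine obstacle here: Corollary C is essentially a formal consequence of Theorem A, and the only points requiring (routine) care are the elementary group-theoretic facts that a perfect nilpotent group is trivial and that a proper subgroup of a finite $p$-group has a strictly larger normalizer inside it.
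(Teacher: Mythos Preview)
Your proof is correct and follows essentially the same approach as the paper's own argument (stated there as the slightly more general Corollary~\ref{corB}): use Theorem~A to force every linear character of $C$ to have order at most $2$, deduce from nilpotence that $C$ is a $2$-group, and then use self-normalization to conclude $C$ is Sylow. You have simply expanded the paper's one-line remarks ``since $C$ is nilpotent, it follows that $C$ is a $2$-group'' and ``since $C$ is self-normalizing, we have that $C=P$'' into explicit arguments via perfect nilpotent groups and normalizer growth in $p$-groups.
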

 
At the end of this note, we discuss to what extent these corollaries can be extended to general finite groups.

\section{The key result}
For characters, we use the notation from \cite{Is} and \cite{N18}.
Recall that a {\sl Carter subgroup} $C$ of a solvable group
is a self-normalizing nilpotent subgroup of $G$. Also, any two of them are $G$-conjugate,
and if $N\nor G$, then $CN/N$ is a Carter subgroup of $G/N$ (\cite{C61}).

\medskip
We begin with a known result, which may not be familiar to every character theorist. It concerns formations (see \cite{DH92} IV.5.18) and, if conveniently applied, could have simplified some of the results in \cite{Is22}.
We present the following elementary proof in the case of interest, including it here for the reader's convenience. 
 
\begin{lem}\label{dh}
Let $G$ be a finite solvable group. Let $K$ be the smallest normal subgroup
of $G$ such that $G/K$ is nilpotent.
If $K$ is abelian, then $K$ is complemented in $G$, 
and its complements are the Carter subgroups of $G$.
\end{lem}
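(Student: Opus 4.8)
The plan is to isolate the single structural fact that $K:=G^{\mathfrak{N}}$ has no nonzero $G$-fixed point, and to deduce both assertions from it. Two observations come for free. Since $K=\gamma_{\infty}(G)=[\gamma_{\infty}(G),G]$, we have $[K,G]=K$; and since $K$ is abelian, $K\le\cent GK$, so $G$ acts on $K$ through $\bar G:=G/\cent GK$, which is a homomorphic image of the nilpotent group $G/K$ and hence is itself nilpotent. Also, if $C$ is a Carter subgroup of $G$, then $CK/K$ is a Carter subgroup of the nilpotent group $G/K$, so $CK/K=G/K$, i.e.\ $CK=G$; and then $C\cap K$, being normalized by $C$ and by the abelian group $K$, is normal in $G=CK$. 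It therefore suffices to prove: (i) $C\cap K=1$ for every Carter subgroup $C$ --- then $C$ is a complement, and, since $G$ has Carter subgroups and any two of them are conjugate, $K$ is complemented and every Carter subgroup is a complement; and (ii) every complement $L$ of $K$ is a Carter subgroup --- here $L\cong G/K$ is already nilpotent, so only the equality $\norm GL=L$ is at stake.

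The crux is: $\cent KG=\zent G\cap K=1$. To prove this, reduce first to the case that $K$ is an abelian $p$-group; then, since $\Phi(K)=K^{p}$ and $[K,G]=K$ force $[\Phi(K),G]=\Phi(K)$, iterating down the Frattini series reduces us further to $K$ elementary abelian, i.e.\ to an $\mathbb{F}_{p}[\bar G]$-module with $[K,\bar G]=K$. Write $\bar G=\bar G_{p}\times\bar G_{p'}$. The $p'$-group $\bar G_{p'}$ acts semisimply on $K$, and because $\bar G_{p}$ centralizes $\bar G_{p'}$ --- the one point where the nilpotency of $G/K$ really enters --- each $\bar G_{p'}$-isotypic summand of $K$ is $\bar G$-invariant; hence $K$ splits as a $\bar G$-module into its $\bar G_{p'}$-fixed part $\cent K{\bar G_{p'}}$ and a complementary submodule. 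On $\cent K{\bar G_{p'}}$ one has $[\,\cdot\,,\bar G]=[\,\cdot\,,\bar G_{p}]$, which is a proper submodule if that part is nonzero (a $p$-group $P$ acting on a nonzero $\mathbb{F}_{p}$-space $V$ cannot have $[V,P]=V$); this contradicts $[K,\bar G]=K$, so $\cent K{\bar G_{p'}}=0$, and a fortiori $\cent K{\bar G}=0$.

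Granting this, (i) and (ii) are short. For (ii): if $L$ is a complement and $k\in K$ normalizes $L$, then $[l,k]\in L\cap K=1$ for each $l\in L$, so $k\in\cent KL$; since $K$ is abelian, $\cent KL\subseteq\cent K{LK}=\cent KG=1$, hence $\norm GL=L$. For (i): suppose $D:=C\cap K\ne 1$ and choose a minimal normal subgroup $V$ of $G$ with $V\le D$; it is an elementary abelian $p$-group. Since $V\le C$ and $C=C_{p}\times C_{p'}$ is nilpotent we have $V\le C_{p}$, while $C_{p'}$ and $K$ both centralize $V$; hence $G=C_{p}\cdot(C_{p'}K)\subseteq C_{p}\cdot\cent GV$, so $G/\cent GV$ is a homomorphic image of the $p$-group $C_{p}$. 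A $p$-group acting on $V$ has nonzero fixed space, so $\cent VG\ne 0$; by minimality $\cent VG=V$, i.e.\ $V\le\zent G$, whence $V\le\zent G\cap K=1$ by the crux --- a contradiction. Therefore $C\cap K=1$.

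The main obstacle is the fact $\cent KG=1$: this is the step that genuinely uses both the abelianness of $K$ and the nilpotency of $G/K$, through the isotypic decomposition. (For residuals of formations other than $\mathfrak{N}$ the analogue can fail even when the residual is abelian --- for instance $G'$ inside an extraspecial $p$-group --- consistently with the whole statement failing at that level of generality.) Everything else is bookkeeping with Carter's conjugacy theorem and the modular law.
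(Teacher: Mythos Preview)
Your proof is correct and follows a genuinely different route from the paper's. The paper argues by induction on $|G|$: it reduces to $K$ a $p$-group by passing to $G/K_{p'}$ for each prime, then writes $C=C_p\times H$ with $H$ the Hall $p'$-subgroup of $G$, uses coprime action to obtain $K=[K,H]$ and $\cent KH=1$, and applies a Frattini argument to identify $\norm GH$ as a nilpotent self-normalizing complement to $K$, forcing $C=\norm GH$. You instead isolate the single structural fact $\cent KG=1$, prove it via the $\bar G_{p'}$-isotypic decomposition of (a reduction of) $K$, and deduce both assertions from it. Your organization has the merit of proving explicitly that \emph{every} complement of $K$ is self-normalizing (your step (ii)), a direction the paper's argument does not address, and of pinpointing exactly where the nilpotency of $G/K$ enters. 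The paper's approach, in turn, stays closer to the standard coprime-action and Frattini toolkit and avoids the module-theoretic detour. As a minor remark, your Frattini-series reduction to elementary abelian $K$ is not strictly needed: the coprime splitting $K_p=[K_p,\bar G_{p'}]\times\cent{K_p}{\bar G_{p'}}$ already holds for any abelian $p$-group, and $[U,\bar G_p]<U$ whenever the $p$-group $U$ is nontrivial.
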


\begin{proof}
Since $G/K$ is nilpotent, then $G=KC$.
We  show that $K\cap C=1$.  We argue by induction on $|G|$.
Suppose that $|K|$ is not a a prime power. Let $p$ be any prime dividing $|K|$.
Let $1<K_{p'}$ be the Hall $p$-complement of $K$.
By induction, we have that $CK_{p'} \cap K=K_{p'}$.
Therefore, if $K_p$ is a Sylow $p$-subgroup of $K$, we have that
 $C \cap K_p \sbs CK_{p'} \cap K \cap K_p=1$.
Now if $1 \ne c \in C \cap K$ and $p$ is any prime dividing $o(c)$,
we have that  $c_p \in C \cap K_p$ (because $K$ has a normal Sylow
$p$-subgroup $K_p$), and this is a contradiction.
Therefore, we may assume that $K$ is a $p$-group for some prime $p$.
Since $G/K$ is nilpotent, then $G$ has a normal Sylow $p$-subgroup.
Write $C=C_p \times H$, where $C_p \in \syl pC$.  Notice that $H$ is a $p$-complement of $G$
and that $KC_p$ is the normal Sylow $p$-subgroup of $G$. Notice that $KH \nor G$, since
$G/K$ is nilpotent. Hence $[KC_p,H] \sbs K$, and $[KC_p,H]=[KC_p, H,H]=[K,H]$, by coprime action.
Thus $D=[K,H] \nor (KC_p)H=G$. Thus  $G/D$ has a nilpotent normal $p$-complement,
and a normal Sylow $p$-subgroup, and we conclude that $G/D$ is nilpotent.
Hence $D=K$ by hypothesis. 
Since $K=[K,H] \times \cent KH$, by coprime action
(and using that $K$ is abelian), we deduce that $\cent KH=1$.  
Now,  since $KH \nor G$, we have that $G=K\norm GH$ by the Frattini argument. Since $\cent KH=1$,
it follows that $\norm GH$ complements $K$ in $G$. Notice then that $\norm GH$
is nilpotent, because it is isomorphic to $G/K$. Also notice that $\norm GH$ is
self-normalizing, by the Frattini argument (since $H$ is a Hall $p$-complement of $G$).
Therefore $C$ and $\norm GH$ are $G$-conjugate. Since $C \sbs \norm GH$, we conclude that
$C=\norm GH$.
\end{proof}
We shall use several times the following result.
If a group $A$ acts on automorphisms on another group $G$, we
denote by ${\rm Irr}_A(G)$ the set of $A$-invariant irreducible
characters of $G$.

\begin{thm}\label{is}
Suppose that $G$ is a finite solvable group and let $C$ be a Carter subgroup
of $G$. Let $K$ be the smallest normal subgroup of $G$ such that $G/K$ is nilpotent, and let $L\nor G$
such that $K/L$ is abelian.
\begin{enumerate}[(a)]
\item
If $\theta \in {\rm Irr}_C(K)$, then the restriction $\theta_L$ contains a unique
$C$-invariant constituent $\theta^\prime \in {\rm Irr}_C(L)$.

\item
If $\varphi \in {\rm Irr}_C(L)$, then the induced character $\varphi^K$ contains
a unique $C$-invariant irreducible constituent $\tilde \varphi \in {\rm Irr}_C(K)$.

\item
The maps $^\prime: {\rm Irr}_C(K) \rightarrow
{\rm Irr}_C(L)$ and $\, \tilde{}: {\rm Irr}_C(L) \rightarrow {\rm Irr}_C(K)$ are inverse bijections.
\item
We have that
$\theta \in {\rm Irr}_C(K)$ extends to $G$ if and only if $\theta^\prime$ extends to $LC$.
\end{enumerate}
\end{thm}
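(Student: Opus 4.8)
The plan is to set the stage with Lemma~\ref{dh}, dispatch (a) and (b) by a single fixed-point argument, deduce (c) formally, and then tackle (d), which I expect to be the real obstacle.

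Write $V=K/L$ and $\bar C=CL/L$. Since the nilpotent residual commutes with quotients, $V$ is the nilpotent residual of $G/L$, and it is abelian by hypothesis; thus Lemma~\ref{dh}, applied to $G/L$, shows that $V$ is complemented in $G/L$ by the Carter subgroups of $G/L$. In particular $G/L=V\rtimes\bar C$ with $\bar C$ self-normalizing, and I record the two facts this yields: $C_V(\bar C)=1$ (an element of $V$ centralizing $\bar C$ normalizes it, hence lies in $\bar C\cap V=1$) and $[V,\bar C]=V$ (modulo $[V,\bar C]$ the group $G/L$ is a central extension of the nilpotent group $\bar C$, hence nilpotent, so $[V,\bar C]$ contains the residual $V$).

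The mechanism behind (a) and (b) is the following observation, to be used twice: \emph{if $W$ is a finite abelian group which is the nilpotent residual of $W\rtimes D$, and $W\rtimes D$ acts on a finite set $\Sigma$ with $W$ acting regularly, then $D$ fixes exactly one point of $\Sigma$.} Indeed, by Lemma~\ref{dh} the complement $D$ is a Carter subgroup of $W\rtimes D$, hence self-normalizing and conjugate to every point stabilizer (these are complements, $W$ being regular); so $D$ stabilizes some $\sigma_0$, and identifying $\Sigma$ with $W$ via $\sigma_0\leftrightarrow 1$, the action of $D$ becomes conjugation on $W$, whose fixed set $C_W(D)$ lies in $N_{W\rtimes D}(D)\cap W=1$. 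For (a): given $\theta\in{\rm Irr}_C(K)$, let $\Sigma$ be the set of constituents of $\theta_L$, a transitive $V$-set by Clifford's theorem; the common point stabilizer $W_0\le V$ is normal in $G/L$, the quotient $(G/L)/W_0$ has nilpotent residual $V/W_0$, and $V/W_0$ acts regularly on $\Sigma$, so the observation produces the unique $C$-invariant constituent $\theta'$ of $\theta_L$. For (b): given $\varphi\in{\rm Irr}_C(L)$, set $\Sigma={\rm Irr}(K\mid\varphi)$ and let $\widehat V={\rm Irr}(K/L)$ act on $\Sigma$ by multiplication; I would first check transitivity of this action, which via the Clifford correspondence $\psi\mapsto\psi^K$ from ${\rm Irr}(T\mid\varphi)$ onto $\Sigma$ (here $T=I_K(\varphi)$) and the projection formula reduces to the transitivity of $\widehat{T/L}$ on ${\rm Irr}(T\mid\varphi)$, valid because $\varphi$ is $T$-invariant with $T/L$ abelian. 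As $C$ normalizes $K$ and $L$, we get an action of $\widehat V\rtimes\bar C$ on $\Sigma$ with $\widehat V$ transitive; and the key structural point is that $\widehat V$ is the dual $\bar C$-module of $V$, so $[\widehat V,\bar C]=\widehat V$ holds (being equivalent to $C_V(\bar C)=1$), whence $\widehat V$ is the nilpotent residual of $\widehat V\rtimes\bar C$. Quotienting by the common $\widehat V$-stabilizer of $\Sigma$ exactly as in (a), the observation delivers the unique $C$-invariant member $\tilde\varphi$ of $\Sigma$.

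Part (c) is then formal: the maps $\theta\mapsto\theta'$ and $\varphi\mapsto\tilde\varphi$ are well defined by (a) and (b); for $\theta\in{\rm Irr}_C(K)$, from $\theta'\prec\theta_L$ one gets $\theta\prec(\theta')^K$ by Frobenius reciprocity, and as $\theta$ is $C$-invariant the uniqueness in (b) forces $\widetilde{\theta'}=\theta$; symmetrically $(\tilde\varphi)'=\varphi$ by the uniqueness in (a). Part (d) is the genuinely technical step. Here I would first use the Clifford correspondence between ${\rm Irr}(K\mid\varphi)$ and ${\rm Irr}(I_K(\varphi)\mid\varphi)$ to reduce to the case where $\varphi$ is $K$-invariant — hence, as $G=KC$, $G$-invariant, with $\theta_L$ a multiple of $\varphi$ — taking care that this reduction is compatible with extendibility and with the canonical isomorphism $G/K\cong LC/L$. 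Then $\theta$ extends to $G$ (resp.\ $\varphi$ extends to $LC$) precisely when the corresponding obstruction class in $H^2(G/K;\mathbb{C}^{\times})$ (resp.\ $H^2(LC/L;\mathbb{C}^{\times})$) is trivial, and the point is to show these two classes correspond under $H^2(G/K;\mathbb{C}^{\times})\cong H^2(LC/L;\mathbb{C}^{\times})$: concretely, that a projective representation of $LC$ affording $\varphi$ can be promoted to one of $G$ affording $\theta$ with the same factor set, via the character triple attached to $(G,L,\varphi)$. I expect this last comparison to be the main obstacle; an equivalent, perhaps cleaner, target is to prove that $(G,K,\theta)$ and $(LC,L,\varphi)$ are isomorphic character triples, which would yield (d) at once.
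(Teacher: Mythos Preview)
The paper's own proof is a two-line citation: it records that $K\cap LC=L$ by Lemma~\ref{dh} and then defers everything to Theorem~3.1 and Lemma~2.1 of \cite{Is22}. So there is essentially nothing to compare against; the question is whether your independent argument stands on its own.

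Your treatment of (a)--(c) is correct and rather clean. The single fixed-point lemma (an abelian nilpotent residual acting regularly forces a unique fixed point for the complement) is exactly the right abstraction, and your two applications are sound. A couple of points worth noting: in (a), the reason the common $V$-stabilizer $W_0$ is normal in $G/L$ is that $V$ is abelian and transitive, so $W_0$ is the kernel of the $V$-action on $\Sigma$, hence $G/L$-invariant; you say this but it deserves emphasis. In (b), the transitivity of $\widehat{T/L}$ on $\mathrm{Irr}(T\mid\varphi)$ uses that for abelian $T/L$ with $\varphi$ fully $T$-invariant, any two members $\psi_1,\psi_2$ of $\mathrm{Irr}(T\mid\varphi)$ satisfy $\langle\psi_1\bar\psi_2,1_L\rangle>0$, hence $\psi_1=\lambda\psi_2$ for some linear $\lambda$ of $T/L$; this is standard but not quite ``Gallagher'' since $\varphi$ need not extend to $T$. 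The duality step $C_V(\bar C)=1\Leftrightarrow[\widehat V,\bar C]=\widehat V$ is correct (coinvariants of the dual are dual to invariants), and from $[\widehat V,\bar C]=\widehat V$ one indeed gets that $\widehat V$ is the nilpotent residual of $\widehat V\rtimes\bar C$, since iterating gives $[\widehat V,\bar C,\ldots,\bar C]=\widehat V$.

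For (d) your plan is the right one, and your Clifford reduction to the case where $\varphi$ is $K$-invariant does go through: with $T=I_K(\varphi)$ one checks $C\cap T=C\cap K=C\cap L$, so $TC/T\cong LC/L$ canonically, and a Mackey computation shows $\theta$ extends to $G$ iff the Clifford correspondent $\psi\in\mathrm{Irr}(T\mid\varphi)$ extends to $TC$. What remains---that the obstruction class of $\psi$ in $H^2(TC/T,\mathbb C^\times)$ matches that of $\varphi$ in $H^2(LC/L,\mathbb C^\times)$, equivalently that $(G,K,\theta)$ and $(LC,L,\varphi)$ are isomorphic character triples---is precisely the content of Lemma~2.1 of \cite{Is22}, and it is not formal: one has to actually build the compatible projective representation (Isaacs does this via a tensor-factorisation argument in the fully-ramified abelian situation). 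So you have correctly located the obstacle; to finish, either invoke that lemma or reproduce its construction.
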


\begin{proof}
By Lemma \ref{dh}, we have that $K\cap LC=L$.
Then this follows from  Theorem 3.1 and Lemma 2.1 of \cite{Is22}.
\end{proof}

If $N \nor G$ and $\theta \in \irr N$, we denote by ${\rm Ext}(G|\theta)$ the (possibly empty)
set of $\chi \in \irr G$ such that $\chi_N=\theta$. The following is the key result in this paper.

\begin{thm}\label{main}
Suppose that $G$ is a finite solvable group, and let $C$ be a Carter subgroup of $G$. Let $K$ be the smallest normal subgroup of $G$
such that $G/K$ is nilpotent.  Let $L\nor G$ be such that $K/L$ is abelian.
Let $\theta \in \irr K$ be $G$-invariant, and let $H=CL$.
Let $\varphi=\theta^\prime \in \irr L$ be the unique $C$-invariant irreducible constituent of $\theta_L$
(by Theorem \ref{is}).
Then there exists a bijection $^{*}: {\rm Ext}(G|\theta) \rightarrow {\rm Ext}(H|\varphi)$
such that $\Q(\chi)=\Q(\chi^*)$.
\end{thm}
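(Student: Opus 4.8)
If ${\rm Ext}(G|\theta)$ is empty then, by part (d) of Theorem \ref{is}, ${\rm Ext}(H|\varphi)$ is empty too, and the empty map is the required bijection; so assume henceforth that both sets are nonempty.

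The first step is to convert the assertion about fields of values into one about a Galois action. The correspondence ${}'\colon{\rm Irr}_C(K)\to{\rm Irr}_C(L)$ is defined by a canonical recipe — pass to the unique $C$-invariant constituent of a restriction — and every $\sigma\in\Gal(\Q_{|G|}/\Q)$ commutes with restriction of characters and permutes the $C$-invariant irreducible characters among themselves; hence ${}'$ is $\sigma$-equivariant, and in particular $\Q(\varphi)=\Q(\theta)$. Put $\Gamma=\Gal(\Q_{|G|}/\Q(\theta))=\Gal(\Q_{|G|}/\Q(\varphi))$. Since $\chi_K=\theta$ we have $\Q(\theta)\subseteq\Q(\chi)$ for every $\chi\in{\rm Ext}(G|\theta)$, and likewise $\Q(\varphi)\subseteq\Q(\psi)$ for $\psi\in{\rm Ext}(H|\varphi)$; thus $\Q(\chi)$ and $\Q(\psi)$ are the fixed fields in $\Q_{|G|}$ of the stabilizers $\Gamma_\chi$ and $\Gamma_\psi$. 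Moreover $\Gamma$ acts on ${\rm Ext}(G|\theta)$ and on ${\rm Ext}(H|\varphi)$, because $\theta$ and $\varphi$ are $\Gamma$-fixed. Consequently it is enough to produce a $\Gamma$-equivariant bijection ${}^{*}\colon{\rm Ext}(G|\theta)\to{\rm Ext}(H|\varphi)$: then $\Gamma_\chi=\Gamma_{\chi^{*}}$ for every $\chi$, and hence $\Q(\chi)=\Q(\chi^{*})$.

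To construct such a bijection I would use the torsor structure of the two sets. Applying Lemma \ref{dh} to $G/L$ — whose smallest normal subgroup with nilpotent quotient is $K/L$, which is abelian, and in which $CL/L$ is a Carter subgroup — gives $K\cap H=L$, so restriction is an isomorphism ${\rm Lin}(G/K)\to{\rm Lin}(H/L)$ that manifestly commutes with $\Gamma$. By Gallagher's theorem, ${\rm Ext}(G|\theta)$ is a principal homogeneous space under ${\rm Lin}(G/K)$ (via $\lambda\cdot\chi=\lambda\chi$) and ${\rm Ext}(H|\varphi)$ one under ${\rm Lin}(H/L)$, the two actions being compatible with $\Gamma$. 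Each side therefore determines a class in $H^{1}(\Gamma,{\rm Lin}(G/K))$ — the obstruction to the existence of a $\Gamma$-fixed extension — and a $\Gamma$-equivariant bijection between the two torsors exists if and only if these classes coincide (and is then unique up to the action of the $\Gamma$-fixed subgroup of ${\rm Lin}(G/K)$). So the whole theorem reduces to: the class attached to $(G,K,\theta)$ equals the one attached to $(H,L,\varphi)$ under the identification ${\rm Lin}(G/K)\cong{\rm Lin}(H/L)$.

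This equality of classes is the crux, and the place where formal arguments give out and one must return to the mechanism behind Theorem \ref{is}. The classes should agree because the correspondence of Theorem \ref{is} is induced by an isomorphism of character triples $(G,K,\theta)\to(H,L,\varphi)$, which by its construction matches the $H^{2}$-obstruction cocycles of the two triples and hence the $H^{1}$-data controlling Galois twists; concretely one wants compatible ``$\Gamma$-good'' extensions $\chi_{0}$ of $\theta$ to $G$ and $\psi_{0}$ of $\varphi$ to $H$ with $\chi_{0}^{\sigma}=\kappa_\sigma\chi_{0}$ and $\psi_{0}^{\sigma}=(\kappa_\sigma)_H\psi_{0}$ for one and the same cocycle $\kappa\colon\Gamma\to{\rm Lin}(G/K)$, after which $(\lambda\chi_{0})^{*}:=\lambda_H\psi_{0}$ is the desired $\Gamma$-equivariant bijection. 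The main obstacle is exactly this: isomorphisms of character triples do not in general respect fields of values, so one cannot simply invoke Theorem \ref{is}; one has to re-run the proof of Theorem~3.1 of \cite{Is22} while tracking the $\Gamma$-action — choosing the projective representations and their factor sets with roots-of-unity values in a $\Gamma$-stable way — and one must keep control of the Clifford-theoretic reductions that occur in this non-coprime setting (passing to a suitable quotient $G/N$, and to the inertia subgroup of a character of a minimal normal subgroup of $G$ contained in $L$), so that at every stage both the ``unique $C$-invariant constituent'' structure of Theorem \ref{is} and $\Gamma$-equivariance are preserved. I would then close the argument by induction on $|G|$, the base case $L=1$ forcing $\theta=1$ (otherwise $K/\ker\theta$ is a nontrivial cyclic section central in $G/\ker\theta$, so $G/\ker\theta$ is a proper nilpotent quotient of $G$, contrary to the choice of $K$) and ${}^{*}$ being just the restriction isomorphism ${\rm Lin}(G)\to{\rm Lin}(C)$.
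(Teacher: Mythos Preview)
Your torsor/$H^{1}$ reformulation is clean and correct as far as it goes: both ${\rm Ext}(G|\theta)$ and ${\rm Ext}(H|\varphi)$ are $\Gamma$-equivariant ${\rm Lin}(G/K)$-torsors, and a $\Gamma$-equivariant bijection exists iff the associated classes in $H^{1}(\Gamma,{\rm Lin}(G/K))$ agree. But you have located the gap yourself and not closed it. Saying that one should ``re-run the proof of Theorem~3.1 of \cite{Is22} while tracking the $\Gamma$-action'' and choose projective representations ``in a $\Gamma$-stable way'' is a plan, not an argument; as you note, character-triple isomorphisms do not in general respect fields of values, and nothing you have written explains why the specific isomorphism implicit in Theorem~\ref{is} should. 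The reductions you sketch (mod out by a minimal normal subgroup inside $L$, pass to an inertia group) are orthogonal to the actual obstruction: the hard case is precisely when $\varphi$ is $G$-invariant and $\theta_L=e\varphi$ is fully ramified, and there your induction on $|G|$ via subgroups of $L$ gains no traction.

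The paper proceeds quite differently. It inducts on $|G:L|$, first reducing to the case where $K/L$ is a chief factor, hence an elementary abelian $p$-group and $H$ maximal in $G$. Then it splits into three concrete situations: if $G_\varphi=H$ the bijection is Clifford induction; if $G_\varphi=G$ and $\theta_L=\varphi$ it is restriction; in both cases the equality $\Q(\theta)=\Q(\varphi)$ plus uniqueness gives $\Q(\chi)=\Q(\chi^{*})$ directly. The remaining fully ramified case $\theta_L=e\varphi$, $e^{2}=|K:L|$, is handled by a further dichotomy: if $p$ divides $|G:K|$ one manufactures an intermediate chief factor $P/Q$ with $P/K$ and $Q/L$ $p$-groups and recurses; if $G/K$ is a $p'$-group one invokes Theorems~9.1 and~6.3 of \cite{Is73}, which in this coprime situation produce a character $\Psi$ of $H/L$ with \emph{rational} values such that $\chi_H=\Psi\,\chi^{*}$, together with the vanishing of $\chi$ off $H^{G}$. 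That rationality of $\Psi$ is the decisive input your cohomological outline is missing.
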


\begin{proof}
We argue by induction by $|G:L|$.
By Lemma \ref{dh}, we have that $H\cap K=L$. Also, we have that
$G=KC$. By Theorem \ref{is}, there is a unique irreducible $C$-invariant constituent $\varphi$ of the restriction $\theta_L$,
$\theta$ is the unique $C$-invariant irreducible constituent of $\varphi^K$,  and
${\rm Ext}(G|\theta)$ is not empty if and only if ${\rm Ext}(H|\varphi)$ is not empty.
Hence, we may assume that $\theta$ extends to $G$ and that $\varphi$ extends to $H$.
We claim that $\Q(\theta)=\Q(\varphi)$. Indeed, let $\sigma \in {\rm Gal}(\Q_{|G|}/\Q)$.
By elementary Galois theory, it suffices to show that $\theta^\sigma=\theta$ if 
and only if $\varphi^\sigma=\varphi$. But this is clear because Galois action commutes with
conjugation and by the uniqueness of $\varphi$ and $\theta$ with respect to each other.

If $L<E<K$ is a normal subgroup of $G$, then $\theta_E$ has a unique $C$-invariant irreducible
 constituent. Also $\eta_L$ has a unique irreducible $C$-invariant constituent which necessarily is $\varphi$. By induction, we easily may assume that $K/L$ is a chief factor of $G$.
 Hence, $K/L$ is an abelian $p$-group and $H$ is a maximal subgroup of $G$.
 
 We have that $L \sbs H \sbs G_\varphi$. Therefore $G_\varphi=H$ or $G_\varphi=G$.
 Assume first that $G_\varphi=H$. Therefore $K_\varphi=L$ and thus $\varphi^K=\theta$.
 By the Clifford correspondence (Theorem 6.11 of \cite{Is}), we have that
 induction defines a bijection $\irr{H|\varphi} \rightarrow \irr{G|\varphi}=\irr{G|\theta}$.
 Hence, induction defines a bijection
 $\ext{H|\varphi} \rightarrow \ext{G|\theta}$.
 We claim that $\Q(\psi)=\Q(\psi^G)$ for $\psi \in \irr{H|\varphi}$.
 By the induction formula, we have that $\Q(\psi^G) \sbs \Q(\psi)$.
 Let $\sigma \in {\rm Gal}(\Q(\psi)/\Q(\psi^G))$.
 Since $\sigma$ fixes $\psi^G$, it follows that $\sigma$ fixes $\theta$.
 Hence $\sigma$ fixes $\varphi$, since $\Q(\theta)=\Q(\varphi)$. Now $\psi^\sigma \in \irr{G_\varphi|\varphi}$
 induces $\psi^G$. By the uniqueness in the Clifford correspondence, we have that
 $\psi^\sigma=\psi$. Hence $\Q(\psi)=\Q(\psi^G)$.
 
 We may assume that $G_\varphi=G$. By the Going-Down theorem (6.18 of \cite{Is}),
 we have that either $\theta_L=\varphi$, or $\theta_L=e\varphi$, with $e^2=|K:L|$.
  Suppose first that $\theta_L=\varphi$. By Lemma 6.8(d) of \cite{N18},
 we have that restriction defines a bijection $\irr{G|\theta} \rightarrow \irr{H|\varphi}$.
 We claim that $\Q(\chi)=\Q(\chi_H)$ for $\chi \in \irr{G|\theta}$. 
 We clearly have that $\Q(\chi_H) \sbs \Q(\chi)$. Suppose now that
 $\sigma \in {\rm Gal}(\Q(\chi)/\Q(\chi_H))$.  Since $\sigma$ fixes $\chi_H$ it follows that
 $\sigma$ fixes $\varphi$. Then $\sigma$ fixes $\theta$, and we have that $\chi$ and $\chi^\sigma$ lie
 over $\theta$ and restrict irreducibly to the same character of $H$. Hence, $\chi^\sigma=\chi$,
 by uniqueness, and the claim is proven.
 
 We are left with the case where $\theta_L=e\varphi$, where $e^2=|K:L|$.
 Suppose that $p$ divides $|G:K|$, and let 
  $P/K$ be   a $p$-group chief factor of $G$ (using that $G/K$ is nilpotent).
 Let $Q=P\cap H$. Since $P/L$ is a $p$-group, we have that $(K/L) \cap \zent{P/L}>1$, 
 and since this is normal in $G$, we conclude that $K/L\sbs \zent{P/L}$.
 Therefore,  we have that $Q \nor G$. Notice that $P/Q$ is a chief factor and that $G/Q$ is not nilpotent
 (since otherwise $G/(K\cap Q)=G/L$ would be nilpotent).

 Let $\Delta=\{\chi_P \, |\, \chi \in \ext{G|\theta}\}$, and let $\Xi=\{\tau_Q \, |\, \tau \in \ext{H|\varphi}\}$.
 We have that
 $$\ext{G|\theta}=\bigcup_{\eta \in \Delta} \ext{G|\eta}$$
 and 
 $$\ext{H|\varphi}=\bigcup_{\gamma \in \Xi} \ext{H|\gamma}$$
 are disjoint unions. 
 It is enough to find a bijection $\eta \mapsto \eta^\prime$ 
 from $\Delta \rightarrow \Xi$ 
and another bijection $^*:\ext{G|\eta} \rightarrow \ext{H|\eta^\prime}$ that preserves fields of values.
Let $\eta \in \Delta$. Since $\eta$ is $G$-invariant,  let $\eta^\prime$ be the unique $C$-invariant irreducible constituent of $\eta_Q$
(by Theorem \ref{is}). Notice that $\eta^\prime$ lies over $\varphi$. Again, by this theorem, we have that $\eta^\prime$
extends to $H$.  We know that $\varphi$ extends to $H$ and therefore
to $Q$. Since $Q/L$ is abelian, it follows that all the irreducible constituents
of $\eta^Q$ are extensions of $\varphi$ (using Gallagher's theorem, Corollary 6.17 of \cite{Is}).
Therefore, $\eta^\prime$ extends $\varphi$,
and therefore $\eta^\prime \in \Xi$. Suppose now that $\gamma \in \Xi$,
and let $\tilde\gamma$ be the unique irreducible constituent of $\gamma^P$ which is $C$-invariant.
Notice that $\tilde\gamma$ lies over $\theta$ (because it lies over $\varphi$, and $\varphi^K=e\theta$). We know that $\theta$ extends to $G$, so it extends to $P$. By the same argument before,
we have that all irreducible constituents of $\theta^P$ are extensions of $\theta$, and therefore,
we conclude that $\tilde\gamma$ extends $\theta$. Hence $\tilde\gamma \in \Delta$,
and our map $$\Delta \rightarrow \Xi$$ given by $\eta \mapsto \eta^\prime$ is a bijection.
 By induction, we are done in this case. 
 
 So we may assume that $G/K$ is a $p'$-group.
 In this case, the theorem follows from Theorems 9.1 and 6.3 of \cite{Is73}.
 Indeed, Theorem 9.1 of \cite{Is73} guarantees the existence of a character $\Psi$ of $H/L$
 satisfying that $|\Psi(h)|^2=|\cent{K/L} h|$ and such that the equation
 $$\chi_H=\Psi \chi^*$$
 defines a bijection $^*:\irr{G|\theta} \rightarrow \irr{H|\varphi}$.
 In fact, this gives a bijection  $^*:\ext{G|\theta} \rightarrow \ext{H|\varphi}$
 by computing degrees.
 In our coprime situation, the fact that the values of $\Psi$ 
 are rational is given after Corollary 6.4 of \cite{Is73}. 
 Also, Theorem 9.1(d) of \cite{Is73} asserts that $\chi(g)=0$ if $g$ is not
 $G$-conjugate to an element of $H$, and we easily deduce that $\Q(\chi)=\Q(\chi^*)$.
 This completes the proof of the theorem.
\end{proof}

\section{Head Characters}
We briefly remind the reader on how head characters are constructed following the
approach in \cite{N22} when $\mathfrak F$ is the formation of nilpotent groups.
Suppose that $G$ is a finite solvable group and fix a Carter subgroup $C$ of $G$. We are going
to define a canonical subset ${\rm H}(G) \sbs \irr G$ of every subgroup of
$G$ that contains $C$ as a Carter subgroup,  by induction on $|G|$.
If $G$ is nilpotent, that is, if $C=G$, then we let ${\rm H}(G)={\rm Lin}(G)$.
Suppose now that $G$ is not nilpotent. Let  $K$ be the smallest normal subgroup of
$G$ such that $G/K$ is nilpotent and $L=K'$   the derived subgroup of $K$.
Again notice that $G=KC$ and $K \cap LC=L$ by Lemma \ref{dh}.
Thus $G=LC$ if and only if $K=L$, which happens if and only if $G$ is nilpotent.
Hence, $H=LC<G$.  We have then defined ${\rm H}(H)$ by induction.
Let $^{\prime}: {\rm Irr}_C(K) \rightarrow {\rm Irr}_C(L)$ be the canonical bijection defined in Theorem \ref{is}.
By Theorem 4.3(b) of \cite{N22}, we have that
${\rm H}(G)$ is the set of $\tau \in \irr G$ such that $\tau_K=\theta \in \irr K$, and 
$\theta^\prime \in \irr L$ lies below any character $\psi \in {\rm H}(H)$. Using this result and induction,
we can easily prove that if $\lambda \in \irr G$ is linear, then $\lambda \in {\rm H}(G)$ and $\lambda {\rm H}(G)={\rm H}(G)$.
(To prove all these results one can also use Corollary 3.7 and Lemma 5.6 of \cite{Is22}.)
Finally, by Theorem 4.3(c) of \cite{N22} (or Corollary 5.7 of \cite{Is22}), if $\tau \in {\rm H}(G)$ and $G/M$ is nilpotent, 
where $M \nor G$, we have that $\tau_M \in \irr M$.

\medskip

Now we are ready to prove Theorem A.

\begin{thm}
Let $G$ be a solvable group, and let $C$ be a Carter subgroup of $G$.
Then there is a bijection $^*:{\rm Lin}(C) \rightarrow {\rm H}(G)$ satisfying
$\Q(\chi)=\Q(\chi^*)$.
\end{thm}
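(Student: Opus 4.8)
The plan is to argue by induction on $|G|$, reducing everything to Theorem \ref{main} applied to the canonical normal subgroups that appear in the recursive construction of the head characters. If $G$ is nilpotent then $C=G$ and ${\rm H}(G)={\rm Lin}(G)={\rm Lin}(C)$, so the identity map works: a linear character $\lambda$ of order $n$ has $\Q(\lambda)=\Q_n$, hence nothing about fields of values is lost. So assume $G$ is not nilpotent, let $K$ be the smallest normal subgroup of $G$ with $G/K$ nilpotent, put $L=K'$ and $H=LC$; as recalled in the text $H<G$, and from $\norm GC=C$ we get $\norm HC=C$, so $C$ is a Carter subgroup of $H$. By induction there is a bijection $g\colon{\rm Lin}(C)\to{\rm H}(H)$ with $\Q(\lambda)=\Q(g(\lambda))$ for every $\lambda$. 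It therefore suffices to produce a field-of-values preserving bijection $f\colon{\rm H}(H)\to{\rm H}(G)$ and set $^{*}=f\circ g$.

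To construct $f$, I would first put both ${\rm H}(H)$ and ${\rm H}(G)$ into parallel disjoint unions indexed by the same set
$$\mathcal V=\{\,\psi_L \ :\ \psi\in{\rm H}(H)\,\}.$$
Each $\psi_L$ is irreducible because $L\nor H$, $H/L\cong C/(C\cap L)$ is nilpotent, and head characters restrict irreducibly to such normal subgroups; and $\psi_L$ is $C$-invariant (being $H$-invariant), so $\mathcal V\sbs{\rm Irr}_C(L)$. Recall the inverse bijections $^{\prime}\colon{\rm Irr}_C(K)\to{\rm Irr}_C(L)$, $\theta\mapsto\theta'$, and $\varphi\mapsto\tilde\varphi$ of Theorem \ref{is}. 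Using the description of ${\rm H}(G)$ recalled above (Theorem 4.3(b) of \cite{N22}): $\tau\in{\rm H}(G)$ precisely when $\tau_K=\theta\in\irr K$ and $\theta'$ lies under some $\psi\in{\rm H}(H)$; since $\psi_L$ is irreducible, the latter says $\theta'\in\mathcal V$, i.e. $\theta=\tilde\varphi$ with $\varphi=\theta'\in\mathcal V$. Hence $${\rm H}(G)=\bigcup_{\varphi\in\mathcal V}\ext{G|\tilde\varphi},$$ a disjoint union since distinct $\varphi$ give distinct $\tilde\varphi$. On the other side, each $\psi\in{\rm H}(H)$ lies in $\ext{H|\psi_L}$; conversely, if $\varphi=(\psi_0)_L\in\mathcal V$, then by Gallagher's theorem any $\tau\in\ext{H|\varphi}$ has the form $\beta\psi_0$ for a linear character $\beta$ of $H$ inflated from $H/L$, so $\tau\in\beta{\rm H}(H)={\rm H}(H)$ by the stability of ${\rm H}(H)$ under multiplication by linear characters noted in the text. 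Thus $${\rm H}(H)=\bigcup_{\varphi\in\mathcal V}\ext{H|\varphi},$$ again a disjoint union.

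For each $\varphi\in\mathcal V$ the character $\theta=\tilde\varphi\in{\rm Irr}_C(K)$ is $G$-invariant ($K$-invariant automatically, $C$-invariant by construction, and $G=KC$) and satisfies $\theta'=\varphi$. Since $L\nor G$ and $K/L=K/K'$ is abelian, Theorem \ref{main} applies with this $\theta$ and gives a bijection $^{*}\colon\ext{G|\tilde\varphi}\to\ext{H|\varphi}$ with $\Q(\chi)=\Q(\chi^{*})$. Gluing these over $\varphi\in\mathcal V$ (the two unions have the same index set and are disjoint) yields a field-preserving bijection ${\rm H}(G)\to{\rm H}(H)$; taking its inverse $f$ and composing with $g$ completes the induction. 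Theorem A is then immediate: through $^{*}$, the fields of values occurring in ${\rm H}(G)$ are exactly the fields $\Q(\lambda)=\Q_{o(\lambda)}$ for $\lambda\in{\rm Lin}(C)$, that is, exactly the cyclotomic fields $\Q_n$ as $n$ runs over the orders of the linear characters of $C$.

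The main obstacle I anticipate is the bookkeeping in the second step — verifying that ${\rm H}(G)$ and ${\rm H}(H)$ split as disjoint unions over the common index set $\mathcal V\sbs{\rm Irr}_C(L)$, and in particular the ``fullness'' claim that every extension to $H$ of a member of $\mathcal V$ already lies in ${\rm H}(H)$; this is precisely the point where the identity $\lambda{\rm H}(H)={\rm H}(H)$ for linear $\lambda$ is indispensable. Once the two decompositions are set up, the field-of-values assertion is a direct application of Theorem \ref{main} together with the elementary fact $\Q(\lambda)=\Q_{o(\lambda)}$ for linear characters.
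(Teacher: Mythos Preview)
Your proposal is correct and follows essentially the same approach as the paper: induct on $|G|$, pass to $H=LC$ with $L=K'$, decompose ${\rm H}(G)$ and ${\rm H}(H)$ as disjoint unions of extension sets indexed by the common set $\mathcal V=\Xi=\{\psi_L:\psi\in{\rm H}(H)\}$, and apply Theorem~\ref{main} blockwise. You have in fact filled in several details the paper leaves implicit, such as the verification that $C$ is a Carter subgroup of $H$, that $\tilde\varphi$ is $G$-invariant, and the Gallagher argument showing $\ext{H|\varphi}\subseteq{\rm H}(H)$.
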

\begin{proof}
We argue by induction on $|G|$. If $G$ is nilpotent, then $C=G$
and ${\rm H}(G)={\rm Lin}(G)$, and there is nothing to prove.
Suppose that $G$ is not nilpotent. Let $K$ be the smallest normal subgroup
of $G$ such that $G/K$ is nilpotent, let $L=K'$ and let $H=LC < G$.
By induction, there is a bijection
$^*:{\rm Lin}(C) \rightarrow {\rm H}(H)$ satisfying
$\Q(\chi)=\Q(\chi^*)$. Hence, it suffices to show that there is
a bijection $^*:{\rm H}(H) \rightarrow {\rm H}(G)$ that respects fields of values.
Let $\Xi=\{ \tau_L \, |\, \tau \in {\rm H}(H)\}$. By Theorem 4.3. of \cite{N22}, we have that 
$\tau_L \in \irr L$. Let $\Delta=\{\theta \in \irr K |\, \theta^\prime \in \Xi\}$.
As we have said before, we have that
$${\rm H}(G)=\bigcup_{\theta \in \Delta} \ext{G|\theta} \, .$$
Also, 
$${\rm H}(H)=\bigcup_{\eta \in \Xi} \ext{H|\eta} \, ,$$
since ${\rm H}(H)$ is closed under multiplication by linear characters.
Now the result easily follows by Theorem \ref{main}.
\end{proof}

The following is Corollary B.
\begin{cor}\label{corC}
Suppose that $G$ is a solvable group, and let $C$ be a Carter subgroup of $G$.
Then $G$ has at least $m$ rational-valued irreducible characters, where $m=|P/\Phi(P)|$,
and $P\in \syl 2 C$.
\end{cor}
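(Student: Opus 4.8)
The plan is to apply Theorem A to count rational characters inside $\mathrm{H}(G)$. By Theorem A, the number of $\chi\in\mathrm{H}(G)$ with $\Q(\chi)=\Q$ equals the number of $\lambda\in\mathrm{Lin}(C)$ with $\Q(\lambda)=\Q$, i.e. the number of linear characters of $C$ of order $1$ or $2$. Since $\mathrm{H}(G)\sbs\irr G$, any lower bound for this count is automatically a lower bound for the number of rational-valued irreducible characters of $G$, so it suffices to count the rational-valued linear characters of the Carter subgroup $C$.

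Next I would reduce the counting to the Sylow $2$-subgroup. Since $C$ is nilpotent, $C=P\times C_{2'}$ where $P\in\syl 2C$ and $C_{2'}$ is the (normal) Hall $2'$-complement. A linear character $\lambda$ of $C$ factors as $\lambda=\lambda_P\times\lambda_{2'}$, and $\lambda$ is rational-valued (equivalently has order dividing $2$) if and only if both factors do. The only $2'$-order element dividing $2$ is $1$, so $\lambda_{2'}$ must be trivial, and $\lambda_P$ ranges over the linear characters of $P$ of order dividing $2$. Thus the number of rational-valued linear characters of $C$ equals the number of linear characters of $P$ of order dividing $2$.

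Finally I would identify this count with $|P/\Phi(P)|$. The linear characters of $P$ of order dividing $2$ are precisely the characters of $P$ that are trivial on $P'$ and on all squares, i.e. the characters of $P/P'P^2$. For a finite $2$-group $P$ one has $\Phi(P)=P'P^2$, so these are exactly the characters of the elementary abelian $2$-group $P/\Phi(P)$, of which there are $|P/\Phi(P)|=m$. Hence $C$ has exactly $m$ rational-valued linear characters, $\mathrm{H}(G)$ contains exactly $m$ rational-valued characters by Theorem A, and therefore $G$ has at least $m$ rational-valued irreducible characters.

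There is essentially no serious obstacle here; the argument is a short deduction from Theorem A together with the standard facts that $\Q(\lambda)=\Q$ for a linear character $\lambda$ iff $o(\lambda)\mid 2$ and that $\Phi(P)=P'P^2$ for a $2$-group. The only point to state carefully is that the bijection of Theorem A sends rational linear characters of $C$ to rational characters of $G$ \emph{within} $\mathrm{H}(G)$, so the inequality in the statement (rather than equality) accounts for rational irreducible characters of $G$ that may lie outside $\mathrm{H}(G)$.
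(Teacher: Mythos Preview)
Your proof is correct and follows the same approach as the paper: apply Theorem A (in its bijection form, which is what is actually proved) to transfer the count of rational-valued head characters to a count of rational-valued linear characters of $C$, and then identify that count with $|P/\Phi(P)|$. The paper compresses all of this into a single sentence (``all characters of $P/\Phi(P)$ are rational-valued''), while you spell out the factorization $C=P\times C_{2'}$ and the identity $\Phi(P)=P'P^2$ explicitly, but the substance is identical.
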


\begin{proof}
It easily follows from Theorem A, since all characters of $P/\Phi(P)$ are
rational--valued, whenever $P$ is a $2$-group.
\end{proof}
\medskip

The following is a slight generalization of Corollary C.

\begin{cor}\label{corB}
Suppose that $G$ is a  solvable group whose only cyclotomic field of values of characters
is the rationals. Then the
Sylow 2-subgroups are self-normalizing. In particular, they are the Carter subgroups of $G$.
\end{cor}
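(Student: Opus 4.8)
The plan is to deduce Corollary~\ref{corB} directly from Theorem~A. Suppose $G$ is a finite solvable group all of whose irreducible characters have cyclotomic field of values equal to $\Q$. Fix a Carter subgroup $C$ of $G$; by Theorem~A, the fields of values occurring among the head characters ${\rm H}(G)$ are precisely the cyclotomic fields $\Q_n$ where $n$ ranges over the orders $o(\lambda)$ of the linear characters $\lambda \in {\rm Lin}(C)$. Since every $\eta \in {\rm H}(G) \sbs \irr G$ must have $\Q(\eta) = \Q$ by hypothesis, we conclude $\Q_n = \Q$ for every such $n$, which forces $n \in \{1,2\}$. Thus every linear character of $C$ has order dividing $2$, i.e. $C/C' $ is an elementary abelian $2$-group, so $C' \supseteq C^2 \cap (\text{stuff})$—more precisely $C/C'$ has exponent dividing $2$.

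The next step is to promote ``$C/C'$ has exponent $\le 2$'' to ``$C$ is a $2$-group''. Since $C$ is nilpotent, write $C = P \times H$ with $P \in \syl 2 C$ and $H$ the Hall $2'$-subgroup. Then $C/C' = P/P' \times H/H'$, and the exponent of $H/H'$ divides $2$; but $H$ has odd order, so $H/H'$ has odd order, and an odd-order group of exponent dividing $2$ is trivial. Hence $H = H'$, and since $H$ is nilpotent this forces $H = 1$. Therefore $C = P$ is a $2$-group. As $C$ is self-normalizing in $G$ (being a Carter subgroup), $C$ is a self-normalizing $2$-subgroup; a self-normalizing $p$-subgroup of any finite group is automatically a full Sylow $p$-subgroup (if $C < S \in \syl 2 G$ then $C < \norm S C \le \norm G C$, contradicting self-normalization, using that a proper subgroup of a $p$-group is properly contained in its normalizer). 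Thus $C \in \syl 2 G$ and the Sylow $2$-subgroups of $G$ are self-normalizing. Finally, since any two Carter subgroups of $G$ are $G$-conjugate and any two Sylow $2$-subgroups are $G$-conjugate, and we have exhibited one subgroup that is both, it follows that the Carter subgroups of $G$ are exactly the Sylow $2$-subgroups.

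For the deduction of Corollary~C itself: a rational group has all irreducible characters rational-valued, in particular all cyclotomic fields of values of characters equal $\Q$, so Corollary~C is the special case of Corollary~\ref{corB} where the hypothesis is strengthened. I do not anticipate a serious obstacle here: the only point requiring mild care is the passage from the statement of Theorem~A about head characters to a statement about $C/C'$, which is immediate once one observes that the map $^*$ of Theorem~A is a bijection onto ${\rm H}(G)$, so the full set of orders of linear characters of $C$ is realized, not merely a subset. The elementary group-theoretic facts used afterward (Hall decomposition of a nilpotent group, triviality of an odd-order group of exponent $2$, and self-normalizing $p$-subgroups being Sylow) are standard.
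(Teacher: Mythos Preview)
Your proposal is correct and follows essentially the same route as the paper: apply Theorem~A to see that every linear character of the Carter subgroup $C$ has order at most $2$, deduce from nilpotency of $C$ that $C$ is a $2$-group, and then use self-normalization to conclude $C$ is a full Sylow $2$-subgroup. Your write-up is simply more explicit about the intermediate steps (the Hall decomposition of $C$ and the normalizer argument inside a Sylow subgroup) that the paper leaves implicit; one small wording point is that your opening paraphrase of the hypothesis reads as ``all characters are rational'', which is stronger than what is assumed, but since you only invoke it for head characters---whose fields of values are cyclotomic by Theorem~A---the argument goes through verbatim under the stated weaker hypothesis.
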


\begin{proof}
Let $C$ be a Carter subgroup of $G$, and let $\lambda \in \irr C$ be linear.
Then $\Q_{o(\lambda)}$ is the field of values of some $\chi \in \irr G$,
by Theorem A.  Hence $\lambda^2=1$. Since $C$ is nilpotent, it follows that $C$ is
a $2$-group, and therefore $C$ is contained in some Sylow 2-subgroup of $G$.
Since $C$ is self-normalizing, we have that $C=P$, as desired. 
\end{proof}

Another (character theoretic) proof of Corollary \ref{corB} can be deduced using one of the
main  results in \cite{Is73}.
Indeed: by Theorem 10.9 of \cite{Is73}, we have a canonical bijection ${\rm Irr}_{2'}(G) \rightarrow {\rm Irr}_{2'}(\norm GP)$, where ${\rm Irr}_{2'}(G)$ are the odd-degree irreducible characters of $G$.
If $\norm GP>P$, then there is a linear character in $\norm GP$ of odd order $n>2$.
Hence, $\Q_n$ is the field of values of some irreducible character of $G$.

\medskip

Many finite groups (but not all, of course) have Carter subgroups and it has been proven 
that they are   conjugate whenever they exist
(see \cite{V09}). As pointed out by B. Sambale, notice that Corollary C also holds for them
by using the main result of \cite{SF19}, which implies that a rational group
has a self-normalizing Sylow 2-subgroup (and the main result of \cite{V09}). 
I haven't extensively investigated the extent to which Corollary B applies to non-solvable groups with a Carter subgroup. Groups with exactly one irreducible rational valued character have odd order
(see Theorem 6.7 of \cite{N18}). Non-solvable groups $G$ with exactly two irreducible rational valued characters have the structure
$\Oh{2'}{G/\oh{2'}G}={\rm PSL}_2(3^{2f+1})$ (Theorem 10.2 of \cite{NT08}).
If a group with such a structure possesses a Carter subgroup $C$, it can be proved that $C$
has odd order.

\medskip
Finally, we have mentioned in the Introduction that the theory of head characters proves that $|C/C'|\le k(G)$, whenever $G$ is solvable and
$C$ is a Carter subgroup of $G$. By Theorem B of \cite{N22}, we
also have that $|C/C'|=k(G)$ if and only if $G$ is abelian.
Again, I wonder if the  inequality and the equality above hold for general finite groups possessing Carter subgroups.

\end{document}